 \title{Efficient Algorithms for Positive Semi-Definite Total Least Squares Problems, Minimum Rank Problem and Correlation Matrix Computation}
\author{Negin Bagherpour \thanks{Faculty of Mathematical Sciences, Sharif University of Technology, Tehran, Iran, (nbagherpour@mehr.sharif.ir).}
\and Nezam Mahdavi-Amiri \thanks{Faculty of Mathematical Sciences, Sharif University of Technology, Tehran, Iran, (nezamm@sharif.ir).}}
\newcommand{\tr}{\mathop{\mathrm{tr}}}
\newcommand{\rand}{\mathop{\mathrm{rand}}}
\begin{document}
\maketitle
\begin{abstract}
We have recently presented a method to solve an overdetermined linear system of equations with multiple right hand side vectors, where the unknown matrix is to be symmetric and positive definite. The coefficient and the right hand side matrices are respectively named data and target matrices. A more complicated problem is encountered when the unknown matrix is to be positive semi-definite. The problem arises in estimating the compliance matrix to model deformable structures and approximating correlation and covariance matrices in financial modeling. Several methods have been proposed for solving such problems assuming that the data matrix is unrealistically error free. Here, considering error in measured data and target matrices, we propose a new approach to solve a positive semi-definite constrained total least squares problem. We first consider solving the problem when the rank of the unknown matrix is known, by defining a new error formulation for the positive semi-definite total least squares problem. Minimization of our newly defined error consists of an optimization problem on the Stiefel manifold. To solve the optimization problem, in each iteration a linear operator subproblem arises for which we propose three different iterative methods. We prove quadratic convergence of our proposed approach. We then describe how to generalize our proposed method to solve the general positive semi-definite total least squares problem. We further apply the proposed approach to solve the minimum rank problem and the problem of computing correlation matrix. Comparative numerical results show the efficiency of our proposed algorithms. In solving positive semi-definite total least squares problems, we find that in most cases the linear operator equation is solved faster and turns to be more accurate using the GMRES method. Also, comparison of the results obtained by our algorithm with the ones due to two other methods, the interior point method and a MATLAB routine for solving a quadratic programming problem with semi-definite constraint based on a path following algorithm, confirms the efficiency of our approach. Numerical test results also show that our approach for computing a correlation matrix leads to smaller standard deviations of error in the target matrix. Finally, the Dolan-Mor\'{e} performance profiles are shown to summarize our comparative study.\\
\end{abstract}
\begin{keywords}
Total least squares, positive semi-definite constraints, deformable structures, correlation matrix
\end{keywords}
\begin{AMS}
65F05, 65F20, 49M05
 \end{AMS}

\markboth{NEGIN BAGHERPOUR AND NEZAM MAHDAVI-AMIRI}{Solving Positive Semidefinite Total Least Squares Problems}

\section{Introduction}
 In several physical problems, such as estimation of the mass inertia matrix in the design of controllers for solid structures and robots, an overdetermined linear system of equations with multiple right hand side vectors arises with the constraint that the unknown matrix be symmetric and positive definite; see, e.g., \cite{4,7,23}. A method for solving such a problem has been proposed in \cite{24}. There are also physical contexts, such as modeling a deformable structure \cite{5,18} and computing the correlation matrix in finance or insurance/reinsurance industries \cite{9,12,22,27}, where a symmetric positive semi-definite solution of an over determined linear system of equations needs to be computed or equivalently the problem
\begin{equation}\label{1}
  DX \simeq T
\end{equation}

needs to be solved, where $D,T \in {\mathbb{R}}^{m \times n}$, with 
$m\geq n$, are given and $X \in {\mathbb{R}}^{n \times n}$, a symmetric positive semi-definite matrix, is to be computed as a solution. In some special applications, the data matrix $D$ has a simple structure, which may be taken into consideration for efficiently organized computations. Computing the correlation matrix in finance is such an example where the data matrix is the identity matrix; see, e.g., \cite{27}.\\

Unlike the positive definite total least squares problem, here the unknown matrix is singular and thus our previously defined error formulation in \cite{24} is no more applicable. We need to formulate the error in the measured data and target matrices as a function of the unknown matrix but not its inverse.\\

A number of least squares formulations have been proposed for the physical problems, which may be classified as ordinary and total least squares problems. Unlike the ordinary formulation, in a total least squares formulation both data and target matrices are assumed to contain error. Also, single or multiple right hand sides may arise. In \cite{24}, ordinary and total least squares formulations with single or multiple right hand sides have been considered. For detailed analysis of total least squares, see \cite{2,8,17}.\\

Here, we consider an specific case of the total least squares problem with multiple right hand side vectors. Our goal is to compute a symmetric positive semi-definite solution $X \in {\mathbb{R}}^{n \times n}$ of the overdetermined system of equations $DX\simeq T$, where both matrices $D$ and $T$ may contain error. Several approaches have been proposed for this problem, commonly considering the ordinary least squares formulation and minimizing the error ${\| \Delta T\|}_F$ over all $n \times n$ symmetric positive semi-definite matrices, where ${\|.\|}_F$ is the Frobenious norm.
Larson \cite{6} discussed a method for computing a symmetric solution to an overdetermined linear system of equations based on solving the corresponding normal system of equations. Krislock \cite{18} proposed an interior point method for solving a variety of least squares problems with positive definiteness constraint. Woodgate \cite{15} described a new algorithm for solving a similar problem in which a symmetric positive semi-definite matrix $P$ is computed to minimize $\|F-PG\|$, with known $F$ and $G$. In \cite{14}, Toh introduced a path following algorithm for solving a positive semi-definite quadratic optimization problem. Later in 2009, he posted a MATLAB package for solving such a problem; see \cite{30}. Hu \cite{3} gave a quadratic programming approach to solve a least squares problem with a symmetric positive definite unknown matrix. In his method, the upper and lower bounds for the entries of the target matrix can be given as extra constraints. In real measurements, however, both the data and target matrices may contain error. Thus, to be practical, a total least squares formulation seems to be appropriate. Here, we define a new error function to consider error in both data and target matrices and propose an iterative algorithm to minimize the defined error.\\

If the goal is to compute the correlation matrix, the mathematical problem is a little different. Computing the correlation matrix is very important in financial modeling. It is applicable for example in obtaining a quadratic model for an economical system and even in reverse engineering for extreme scenario stress testing \cite{25}. In this case, the data matrix is the identity and a large number of linear constraints are to be satisfied. Sun \cite{12} presented an algorithm for computing the correlation matrix. Rebonato \cite{9} and Werner \cite{22} also discussed solving the same problem. We will see later that the minimum rank problem can also be solved by applying our proposed algorithm. This problem appears in the literature in diverse areas including system identification and control, Euclidean embedding, and collaborative filtering; see \cite{10,16,31}. In a minimum rank problem, the goal is to find a positive semi-definite solution with the minimum possible rank to an overdetermined linear system of equations.\\

The remainder of our work is organized as follows. In Section 2, we define a new error function for solving a positive semi-definite total least squares problem with a fixed rank. A method for solving the resulting optimization problem is presented in Section 3. Also, a discussion on solving the positive semi-definite total least squares problem (with arbitrary rank) is given in Section 3. In Section 4, we introduce two slightly different problems and discuss how to solve them based on the proposed method in Section 3. These two problems are: the minimum rank problem and computing the correlation matrix. Comparative computational results are given in Section 5. Section 6 gives our conclusion.
\section{Problem Formulation}
Available methods for solving a positive semi-definite least squares problem consider an ordinary least squares formulation; see, e.g., \cite{14,18}. A practically useful total error formulation was introduced in \cite{24} for a positive definite total least squares problem. Based on this formulation, the solution of the optimization problem
 \begin{equation}\label{2}
  \min\limits_{X\succ 0} \tr(DX-T)^T (D-TX^{-1})
\end{equation}
is a solution of a corresponding positive definite total least squares problem, where $X$ is symmetric and by $X\succ 0$, we mean $X$ is positive definite. The error formulation in \cite{24} not being suitable here, we first motivate and present a new error formulation for the positive semi-definite total least squares case.\\

In (\ref{2}), the entries of $D-TX^{-1}$ and $DX-T$ represent the errors in $D$ and $T$, respectively. Here, we need to represent the error in $D$ independent of $X^{-1}$. Before discussing how to solve the positive semi-definite total least squares problem, we consider the newly noted problem, positive semi-definite total least squares problem with a given rank, $r$, of the unknown matrix (R$r$-PSDTLS). In Section 3, we outline an algorithm for solving R$r$-PSDTLS and discuss how to solve the positive semi-definite total least squares problem applying the proposed algorithm.\\

The error in $D$ is supposed to be the difference between the real value of $D$ and the predicted value for $D$ obtained by $DX\simeq T$. To compute the predicted value for $D$, we use the general least squares solution of the system $XD^T\simeq T^T$. Considering the block form $D^T=\left(
                                              \begin{array}{c}
                                                 d_1^T \\
                                                 \vdots \\
                                                 d_n^T
                                               \end{array}
                                             \right)$ and $T^T=\left(
                                               \begin{array}{c}
                                                 t_1^T \\
                                                 \vdots \\
                                                 t_n^T
                                               \end{array}
                                             \right)$, where $d_i,t_i \in {\mathbb{R}}^{m}$, for $i=1,\cdots,n$, we have $Xd_i=t_i$, for $i=1,\cdots,n$. The general solution to such a linear system has the form
                                             \begin{equation}\label{3}
                                             d_i=X^{\dag} t_i+n_i,
                                             \end{equation}
                                             where $X^{\dag}$ is the pseudo-inverse of $X$ and $n_i$ is an arbitrary vector in the null space of $X$ \cite{21}. A straight choice for $n_i$ is $n_i=0$ which results in $d_i=X^{\dag} t_i$ and $\Delta D=D-TX^{\dag}$. We later consider the suitable choices for $n_i$ which minimizes ${\|\Delta D\|}_F$. To compute $d_i$ from (\ref{3}), the spectral decomposition can be applied.\\\\

\textbf{Result.} \textrm{(Spectral decomposition) \cite{21}
All eigenvalues of a symmetric matrix, $A\in {\mathbb{R}}^{n\times n}$, are real and there are $n$ mutually orthonormal vectors representing the corresponding eigenvectors. Thus, there exist an orthonormal matrix $\bar{U}$ with columns being the eigenvectors of $A$ and a diagonal matrix $\bar{D}$ containing the eigenvalues such that $A=\bar{U}\bar{D}{\bar{U}}^T$. Suppose that $D$ and $U$ are respectively formed by ordering the diagonal elements of $\bar{D}$ non-increasingly and rearranging the corresponding columns of $\bar{U}$. The ordered spectral decomposition of $A$ has the form $A=UDU^T$. Also, if $A$ is positive semi-definite with $\rank(A)=r$, then $r$ of its eigenvalues are positive and the rest are zero, and thus we can set $D=\left(
                                               \begin{array}{cc}
                                                 S^2 & 0 \\
                                                 0 & 0
                                               \end{array}
                                             \right)$, where $S^2\in {\mathbb{R}}^{r\times r}$ is diagonal and nonsingular. Hence, the ordered spectral decomposition of a symmetric positive semi-definite matrix $A$ is $A=U\left(
                                               \begin{array}{cc}
                                                 S^2 & 0 \\
                                                 0 & 0
                                               \end{array}
                                             \right)U^T$, with $U^TU=UU^T=I$. Considering the block form $U=\left( U_r\hspace{0.3cm} U_{n-r}\right)$, we get $A=U_rS^2U_r^T$. Moreover, the columns of matrices $U_r$ and $U_{n-r}$ respectively form a basis for the range and null space of $A$.\\\\}

Now, making use of the spectral decomposition of $A$ in (\ref{3}), we have
                                             \[d_i=X^{\dag} t_i+U_{n-r}z_i,\]
                                             where $z_i \in {\mathbb{R}}^{n-r}$, for $i=1,...,m$, are arbitrary vectors, and
                                             \[D^T=X^{\dag} T^T+U_{n-r}\left(
                                               \begin{array}{ccc}
                                                 z_1 & \cdots & z_m
                                               \end{array}
                                             \right).\]
                                             Thus, the predicted value for $D$ is
                                             \begin{equation}\label{5}
                                             D_p=TX^{\dag}+ZU_{n-r}^T,
                                             \end{equation}
                                             with $Z\in {\mathbb{R}}^{m\times (n-r)}$, arbitrary, and the error in $D$ is equal to $\Delta D=D-(TX^{\dag}+ZU_{n-r}^T)$. A reasonable choice for $Z$ in this formulation would be the one minimizing the norm of $\Delta D$, which is the solution of the optimization problem
                                             \begin{equation}\label{6}
                                             \min\limits_{Z} {\|F-ZU_{n-r}^T\|}_F^2,
                                             \end{equation}
                                             where $F=D-TX^{\dag}$ and ${\|.\|}_F$ is the Frobenius norm. Solving (\ref{6}) results in \cite{21}
                                             \begin{equation}\label{7}
                                             Z^{\ast}=FU_{n-r}=(D-TX^{\dag})U_{n-r}.
                                             \end{equation}
                                             Substituting (\ref{7}) in (\ref{5}), we get
\begin{eqnarray}
\Delta D &=& D-(TX^{\dag}+Z^{\ast}U_{n-r}^T) = (D-TX^{\dag})- (D-TX^{\dag})U_{n-r}U_{n-r}^T\nonumber \\
 &=& (D-TX^{\dag})(I-U_{n-r}U_{n-r}^T). \label{8}
\end{eqnarray}
Using $I-U_{n-r}U_{n-r}^T=U_rU_r^T$ along with (\ref{8}), we get
\[\Delta D = (D-TX^{\dag})U_rU_r^T.\]
Based on the above discussion, $\Delta D =  (D-TX^{\dag})U_rU_r^T$ and $\Delta T= DX-T$ represent the error in $D$ and $T$ respectively. Thus, to solve a rank $r$ positive semi-definite total least squares problem, it is appropriate to minimize the error
\[E={\sum}_{i=1}^m{\sum}_{j=1}^n{\Delta D}_{ij}{\Delta T}_{ij}=\tr({\Delta T}^T{\Delta D}),\]
with $\tr(\cdot)$ standing for trace of a matrix. Consequently, the optimization problem
\begin{eqnarray}
&\min& \tr({\Delta T}^T{\Delta D})\label{10} \\
&s.t.& \nonumber\\
&&X\succeq 0\nonumber\\
&&\rank(X)=r\nonumber
\end{eqnarray}
needs to be solved, where $X$ is symmetric and by $X\succeq 0$, we mean $X$ is positive semi-definite. We can rewrite the optimization problem using the spectral decomposition of $X$ and substituting $X$ and $X^{\dag}$ by $U_rS^2U_r^T$ and $U_rS^{-2}U_r^T$ respectively. Considering well-known properties of the trace operator \cite{21} and the above formulation for $X$ and $X^{\dag}$, we get
\begin{eqnarray}
E&=&\tr({\Delta T}^T{\Delta D})=\tr {(DX-T)}^T(D-TX^{\dag})U_rU_r^T \nonumber\\
 &=&\tr (U_rS^2U_r^TD^T-T^T)(D-TU_rS^{-2}U_r^T)U_rU_r^T\nonumber\\
 &=&\tr (S^2U_r^TD^T-U_r^TT^T)(DU_r-TU_rS^{-2})\nonumber\\
 &=&\tr (U_r^TD^TDU_rS^2-U_r^TT^TDU_r-U_r^TT^TDU_r+U_r^TT^TTU_rS^{-2}).\label{11}
\end{eqnarray}
Letting $A=D^TD$, $C=D^TT+T^TD$ and $B=T^TT$, problem (\ref{10}) is then equivalent to
\begin{equation}\label{12}
\min\limits_{Y,S}\tr (Y^TAYS^2-Y^TCY+Y^TBYS^{-2}),
\end{equation}
where $Y\in {\mathbb{R}}^{n\times r}$ satisfies $Y^TY=I$ and $S\in {\mathbb{R}}^{r\times r}$ is a nonsingular diagonal matrix.\\

The Lagrangian function corresponding to the constrained optimization problem
\begin{eqnarray}\nonumber
&\min& f(X)\nonumber\\
&s.t.& g(X)=0\nonumber
\end{eqnarray}
is $L(X,\lambda)=f(X)+{\lambda}^T g(X)$, where the Lagrangian coefficient vector ${\lambda}$ corresponds to the constraint vector $g(X)=0$. Necessary conditions for a solution, known as Karush-Kahn-Ticker conditions, is ${\nabla}_XL(X,\lambda)=0$ as well as ${\nabla}_{\lambda}L(X,\lambda)=0$, which gives $g(X)=0$; for KKT conditions, see \cite{19}. Thus, $L(Y,S,\Omega)=\tr (Y^TAYS^2-Y^TCY+Y^TBYS^{-2}+\Omega(Y^TY-I))$ is the Lagrangian function for the optimization problem (\ref{12}).\\

\begin{lemma}
An appropriate characteristic of the error formulation proposed by
\begin{equation}\label{66}
E=\tr (Y^TAYS^2-Y^TCY+Y^TBYS^{-2}),
\end{equation}
is that its value is nonnegative and it is equal to zero if and only if $\Delta D=0$.
\end{lemma}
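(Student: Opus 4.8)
The plan is to recognize that the quadratic-in-$S$, quadratic-in-$Y$ expression (\ref{66}) is in fact a perfect square in the Frobenius norm, from which both nonnegativity and the zero-characterization follow almost immediately. To set this up, I would first reintroduce the two $m\times r$ blocks $P=DY$ and $Q=TY$, so that the three Gram-type terms collapse to $Y^TAY=P^TP$, $Y^TCY=P^TQ+Q^TP$ and $Y^TBY=Q^TQ$. Substituting these into (\ref{66}) rewrites $E$ purely in terms of $P$, $Q$ and the diagonal matrix $S$.

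The key step is to verify the identity
\[
E=\tr\bigl(P^TPS^2-P^TQ-Q^TP+Q^TQS^{-2}\bigr)={\|PS-QS^{-1}\|}_F^2 .
\]
Expanding the right-hand side gives four trace terms: the two ``square'' terms $\tr(SP^TPS)$ and $\tr(S^{-1}Q^TQS^{-1})$ reproduce $\tr(P^TPS^2)$ and $\tr(Q^TQS^{-2})$ at once, while the cross terms $-\tr(SP^TQS^{-1})$ and $-\tr(S^{-1}Q^TPS)$ collapse to $-\tr(P^TQ)$ and $-\tr(Q^TP)$ after the factors $S,S^{-1}$ are cancelled by cyclic invariance of the trace (this is where the diagonal, hence commuting, structure of $S$ together with $SS^{-1}=I$ is used). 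Since ${\|PS-QS^{-1}\|}_F^2={\|DYS-TYS^{-1}\|}_F^2\geq 0$, nonnegativity of $E$ is then immediate.

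For the ``if and only if'' part, $E=0$ forces $PS-QS^{-1}=0$, and right-multiplying by the nonsingular $S^{-1}$ yields the equivalent condition $P=QS^{-2}$, that is $DY=TYS^{-2}$. It remains to connect this with $\Delta D=0$. I would recall that, for $X=YS^2Y^T$ and $X^{\dag}=YS^{-2}Y^T$, equation (\ref{8}) together with $Y^TY=I$ reduces the error block to $\Delta D=(DY-TYS^{-2})Y^T=(P-QS^{-2})Y^T$. Because $Y$ has orthonormal columns, the map $M\mapsto MY^T$ is injective on $m\times r$ matrices (one recovers $M$ by post-multiplying by $Y$, since $MY^TY=M$), so $\Delta D=0$ is equivalent to $P-QS^{-2}=0$, which is exactly the condition obtained from $E=0$. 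Hence $E=0$ if and only if $\Delta D=0$.

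The only genuinely delicate point I anticipate is spotting the Frobenius-norm completion in the first place: the cancellation of the $S$-factors in the cross terms is what makes the bookkeeping close up to a clean square, and matching the powers of $S$ correctly ($S$ pairing with $P=DY$, $S^{-1}$ with $Q=TY$) is the step that must be checked with care. Everything after that is routine trace algebra together with the orthonormality $Y^TY=I$ and the nonsingularity of $S$.
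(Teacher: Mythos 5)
Your proof is correct, and its centerpiece---rewriting $E$ as the squared Frobenius norm ${\|DYS-TYS^{-1}\|}_F^2$---is precisely the identity on which the paper's own proof rests (there written as $E=\tr\,{(DYS-TYS^{-1})}^T(DYS-TYS^{-1})$), so nonnegativity is handled identically in both. Where you genuinely differ is in closing the equivalence with $\Delta D=0$. The paper splits the argument into two one-way steps: the direction $\Delta D=0\Rightarrow E=0$ is dispatched from the original trace form $E=\tr({\Delta T}^T\Delta D)$ coming from (\ref{11}), while for $E=0\Rightarrow\Delta D=0$ it right-multiplies $DYS=TYS^{-1}$ by $SY^T$ to obtain $DX=TX^{\dag}X$ (equation (\ref{67})), i.e. $(D-TX^{\dag})X=0$, and then concludes $\Delta D=(D-TX^{\dag})XX^{\dag}=0$. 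You instead factor the error explicitly as $\Delta D=(P-QS^{-2})Y^T$ and invoke injectivity of $M\mapsto MY^T$ (a consequence of $Y^TY=I$), which yields the two-way chain $E=0\iff P=QS^{-2}\iff\Delta D=0$ in a single stroke, with no pseudo-inverse manipulation at all. Both routes consume the same ingredients---orthonormality of $Y$ and nonsingularity of $S$---but yours is slightly more self-contained, since it does not lean on the earlier derivation (\ref{11}) for the easy direction, whereas the paper's version stays in the language of $X$ and $X^{\dag}$ in which the problem and the error $\Delta D$ were originally posed.
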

\begin{proof}
It is clear that if $\Delta D=0$, then $E=\tr ({\Delta T}^T \Delta D)=0$. Assuming $E=0$, from (\ref{66}) we have
\[E=\tr {(DYS-TYS^{-1})}^T(DYS-TYS^{-1})=0,\]
which holds if and only if
\[DYS=TYS^{-1}.\]
Multiplying both sides from right by $SY^T$, we get
\[DYS^2Y^T=TYY^T,\]
or equivalently
\begin{equation}\label{67}
DX=TX^{\dag}X.
\end{equation}
If (\ref{67}) is satisfied, then we have $(D-TX^{\dag})X=0$; hence, $\Delta D=(D-TX^{\dag})XX^{\dag}=0$.
\end{proof}

\section{Mathematical Solution}
Here, we discuss how to solve (\ref{12}). We also study the computational complexity and the convergence properties of our proposed algorithm.
\subsection{\textbf{Solving Rank $r$ Positive Semi-definite Total Least
Squares Problem}}
We are to propose an algorithm for solving (\ref{12}). More precisely, a nonsingular diagonal matrix $S=diag(s_1,\cdots,s_r)$ and a matrix $Y\in V_r({\mathbb{R}}^n)$ need to be computed to minimize
\[E=f(Y,S)=\tr (Y^TAYS^2-Y^TCY+Y^TBYS^{-2}),\]
where $V_r({\mathbb{R}}^n)$ is the Stiefel manifold \cite{1}:
\[V_r({\mathbb{R}}^n)=\{ A\in {\mathbb{R}}^{n\times r}; A^TA=I\}.\]
In the following lemma, we show that the optimization problem (\ref{12}) is strictly convex under a weak assumption on the data and target matrices. We also make use of the well-known properties of convexity to propose our algorithm.\\\\
\begin{lemma}\label{55}
The function $f(Y,S)=\tr (Y^TAYS^2-Y^TCY+Y^TBYS^{-2})$ is always convex and it is strictly convex on the set $\{(Y,S)|Y\in {\mathbb{R}}^{n \times r}, S\in F\}$, where $F=\{S=diag(s_1,\ldots,s_r)| \rank(s_i^2D-T)=n \hspace{0.2cm} for \hspace{0.2cm} i=1,\ldots,r\}$.
\end{lemma}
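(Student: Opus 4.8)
The plan is to reduce the statement to a positive-semidefiniteness property of the Hessian of $f$, after first putting the objective into a form that decouples the columns of $Y$.

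First I would rewrite $f$ as a single squared Frobenius norm. Exactly as in the proof of the preceding lemma, the trace identities give
\[ f(Y,S)=\tr (DYS-TYS^{-1})^T(DYS-TYS^{-1})=\|DYS-TYS^{-1}\|_F^2 . \]
Writing $Y=(y_1,\ldots,y_r)$ and $S=\mathrm{diag}(s_1,\ldots,s_r)$, the $k$-th column of $DYS-TYS^{-1}$ is $(s_kD-s_k^{-1}T)y_k$, so the objective splits as
\[ f(Y,S)=\sum_{k=1}^{r}\|(s_kD-s_k^{-1}T)\,y_k\|_2^{2}=\sum_{k=1}^{r}f_k(y_k,s_k). \]
Since the summands depend on disjoint blocks of variables, the Hessian of $f$ is block diagonal, so $f$ is convex (respectively strictly convex) exactly when each $f_k$ is convex (respectively strictly convex) in its own pair $(y,s)$. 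Because $f$ involves $S$ only through $S^2$ and $S^{-2}$, each $f_k$ is even in $s$ and one may work on the convex domain $\{(y,s):y\in\mathbb{R}^n,\ s>0\}$, which is consistent with the positive eigenvalues produced by the ordered spectral decomposition. This reduces everything to one term $f_k(y,s)=\|M(s)y\|_2^2$ with $M(s):=sD-s^{-1}T$.

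Next I would compute the Hessian of $f_k$ directly. The naive route of viewing $\|M(s)y\|_2^2$ as a convex norm composed with a map fails, since $(y,s)\mapsto M(s)y$ is not affine, so I would turn to second derivatives. The clean part is the $y$-block: from $f_k=y^TM(s)^TM(s)y$ one gets $\nabla^2_{yy}f_k=2\,M(s)^TM(s)\succeq0$, the contributions $s^2A$, $-C$, $s^{-2}B$ collapsing precisely because $C=D^TT+T^TD$ is the cross term of $M(s)^TM(s)=s^2A-C+s^{-2}B$. This is exactly where $F$ enters: since $s\neq0$ we have $s_k^2D-T=s_kM(s_k)$, so $\rank(s_k^2D-T)=n$ is equivalent to $M(s_k)$ having full column rank $n$, i.e.\ to $\nabla^2_{yy}f_k=2M(s_k)^TM(s_k)\succ0$. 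Thus on $F$ the $y$-block is strictly positive definite.

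The main obstacle is the $y$--$s$ coupling: joint convexity requires the full bordered Hessian
\[ H=\left(\begin{array}{cc} \nabla^2_{yy}f_k & \nabla^2_{ys}f_k \\ (\nabla^2_{ys}f_k)^T & \partial^2_{ss}f_k \end{array}\right) \]
to be positive (semi)definite, equivalently (on $F$, where $\nabla^2_{yy}f_k\succ0$) that the Schur complement $\partial^2_{ss}f_k-(\nabla^2_{ys}f_k)^T(\nabla^2_{yy}f_k)^{-1}(\nabla^2_{ys}f_k)$ be nonnegative. With $\dot M=\partial_sM=D+s^{-2}T$ one finds $\nabla^2_{ys}f_k=2(\dot M^TM+M^T\dot M)y$, and the natural device is to complete the square, writing the quadratic form of $H$ in a direction $(v,\tau)$ as
\[ 2\|Mv+\tau\dot My\|_2^2+R(v,\tau),\qquad R(v,\tau)=4\tau\,v^T\dot M^TMy+\tau^2\,y^T(4s^{-4}B-2s^{-2}C)y, \]
and then to establish $R(v,\tau)\ge0$ (with strict inequality on $F$) using the structure of $A,B,C$ together with the rank condition defining $F$. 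I expect this estimate on the coupling term $R$ — and not any of the preceding algebra — to be the real content of the proof, since the term $4\tau\,v^T\dot M^TMy$ mixing the direction of $y$ with the scaling $s$ is the only one not automatically controlled by the positivity of $A$ and $B$.
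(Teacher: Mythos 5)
Your opening reduction --- rewriting $f$ as $\|DYS-TYS^{-1}\|_F^2$, splitting it over columns as $\sum_k y_k^T H_k y_k$ with $H_k=(s_kD-s_k^{-1}T)^T(s_kD-s_k^{-1}T)\succeq 0$, and matching the rank condition defining $F$ to full column rank of $s_kD-s_k^{-1}T$ via $s_k^2D-T=s_kM(s_k)$ --- is precisely the paper's entire proof. The paper stops there: it reads convexity (and strictness, when each $H_k\succ 0$) off the quadratic forms $y_k^T H_k y_k$, i.e.\ it establishes convexity in $Y$ with $S$ held fixed, and never touches the $y$--$s$ coupling. Everything after that point in your proposal is an attempt to prove something strictly stronger, namely joint convexity in $(Y,S)$, and there your argument has a genuine gap: the decisive estimate $R(v,\tau)\geq 0$ is not proved but merely announced as ``the real content of the proof.''

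That gap cannot be closed, because joint convexity is false on the set in question. Take $m=n=r=1$, $D=1$, $T=0$, so $A=1$, $B=C=0$ and $f(y,s)=s^2y^2$; the rank condition $\mathrm{rank}(s^2D-T)=\mathrm{rank}(s^2)=1=n$ holds for every $s\neq 0$, yet the Hessian
\[
\left(\begin{array}{cc} 2s^2 & 4sy \\ 4sy & 2y^2 \end{array}\right)
\]
has determinant $-12s^2y^2<0$ whenever $sy\neq 0$. In your own notation this example gives $\dot M=D$, $M=sD$, hence $R(v,\tau)=4s\tau\,v^TAy$, which changes sign with $\tau$ --- so no estimate using the structure of $A,B,C$ and the rank condition can make it nonnegative. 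Your instinct that the coupling term is the only nontrivial part was correct, but in the fatal direction: it is exactly what fails. The lemma is tenable only under the reading the paper's proof implicitly adopts --- convexity as a function of $Y$ for each fixed $S\in F$ (one also has separate convexity in each $s_i>0$, since $t\mapsto \alpha t^2+\beta t^{-2}$ with $\alpha,\beta\geq 0$ is convex, but not joint convexity). A correct write-up should either restrict the claim to blockwise convexity, as the paper's argument in fact does, or flag the joint statement as false with a counterexample like the one above.
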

\begin{proof}
The key point of the proof is to reformulate $f(Y,S)$ as
\begin{eqnarray}\nonumber
f(Y,S)&=&\tr (Y^TAYS^2-Y^TCY+Y^TBYS^{-2})\\
&=&{\sum}_{i=1}^{r}s_i^2y_i^TAy_i-y_i^TCy_i+\frac{1}{s_i^2}y_i^TBy_i\nonumber\\
&=&{\sum}_{i=1}^{r}y_i^T{(s_iD-\frac{1}{s_i}T)}^T(s_iD-\frac{1}{s_i}T)y_i.\nonumber
\end{eqnarray}
Thus, $f(Y,S)$ is always convex and it is strictly convex if and only if $H_i={(s_iD-\frac{1}{s_i}T)}^T$\\$(s_iD-\frac{1}{s_i}T)$, for $i=1,\ldots,r$, is positive definite which holds if and only if $s_iD-\frac{1}{s_i}T$, for $i=1,\ldots,r$, has full column rank.
\qquad
\end{proof}

\vspace{0.3cm}

\textbf{Note.} Since the function $f(Y,S)$ is strictly convex and the set $\{Y| Y^TY=I\}$ is convex, a point $(Y^{\ast},S^{\ast})$ satisfying the Karush--Kuhn--Tucker (KKT) optimality conditions is the unique solution of problem (\ref{12}); for KKT conditions, see \cite{19}.\\\\
Next, in the following theorem we derive the KKT optimality conditions for problem (\ref{12}).\\
\begin{theorem}\label{14}
If $(Y,S)$ is the solution of problem (\ref{12}), then it satisfies
\begin{equation}\label{27}
s_i={(\frac{{y_i}^TBy_i}{{y_i}^TAy_i})}^{\frac{1}{4}},
\end{equation}
where $y_i$ is the $i$th column of $Y$.\\
\end{theorem}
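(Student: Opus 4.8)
The plan is to exploit the separable structure of $f$ uncovered in the proof of Lemma \ref{55}, which reduces the stationarity condition in $S$ to $r$ decoupled single-variable problems. First I would recall the reformulation
\[f(Y,S)=\sum_{i=1}^{r}\left(s_i^2\, y_i^T A y_i - y_i^T C y_i + \frac{1}{s_i^2}\, y_i^T B y_i\right),\]
in which the diagonal entry $s_i$ enters only through the $i$th summand. Then I would form the Lagrangian $L(Y,S,\Omega)=\tr(Y^TAYS^2-Y^TCY+Y^TBYS^{-2}+\Omega(Y^TY-I))$ exactly as displayed above the statement, and note that the orthogonality constraint $Y^TY=I$ involves only $Y$. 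Consequently the multiplier term is annihilated upon differentiating with respect to $s_i$, so the KKT stationarity condition in $S$ collapses to $\partial f/\partial s_i=0$ for each $i$.

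The core step is then a one-variable calculation: differentiating the $i$th summand gives
\[\frac{\partial f}{\partial s_i}=2 s_i\, y_i^T A y_i-\frac{2}{s_i^3}\, y_i^T B y_i=0,\]
and multiplying through by $s_i^3/2$ yields $s_i^4\, y_i^T A y_i = y_i^T B y_i$. Solving for the positive root produces the claimed identity for $s_i$. Because $f$ depends on $S$ only through $S^2$ and $S^{-2}$, the sign of each $s_i$ is immaterial, and one may restrict attention to $s_i>0$ when extracting the fourth root.

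The only point requiring care is the well-definedness of the root. Since $A=D^TD$, we have $y_i^TAy_i={\|Dy_i\|}^2$, which is strictly positive because $\|y_i\|=1$ (as $Y^TY=I$) together with the full-column-rank hypothesis on the data forces $Dy_i\neq 0$; likewise $B=T^TT$ is positive semi-definite, so $y_i^TBy_i\geq 0$. Hence the ratio $y_i^TBy_i/y_i^TAy_i$ is nonnegative and its real fourth root is well-defined, giving a genuine nonsingular diagonal $S$. I do not anticipate any substantive obstacle here: once the separable form is invoked the result is essentially immediate, the only subtlety being the sign and positivity bookkeeping just described.
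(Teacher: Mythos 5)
Your proposal is correct and takes essentially the same route as the paper: both expand the trace into the separable sum $\sum_i \bigl(s_i^2\, y_i^TAy_i - y_i^TCy_i + s_i^{-2}\, y_i^TBy_i\bigr)$, observe that the orthogonality constraint term in the Lagrangian is independent of $s_i$, set $\partial L/\partial s_i = 2s_i\, y_i^TAy_i - \tfrac{2}{s_i^3}\, y_i^TBy_i = 0$, and solve for $s_i$. Your added bookkeeping on positivity of $y_i^TAy_i$ and the choice of the positive fourth root is a minor refinement the paper leaves implicit, not a different argument.
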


\begin{proof} The KKT necessary conditions for (\ref{12}) are obtained by setting $\nabla L=0$. Thus, if $(Y,S)$ forms a solution for (\ref{12}) with $S=diag(s_1, \cdots, s_r)$, then we must have $\frac{\partial L}{\partial s_i}=0$, for $i=1,\cdots,r$. To simplify the computation of $\frac{\partial L}{\partial s_i}$, we can reformulate $L(Y,S,\Omega)$ using the definition of the trace operator. Let $G=Y^TAYS^2$ and $H=Y^TBYS^{-2}$. We have
\[\tr(G)={\sum}_{i=1}^r G_{ii}={\sum}_{i=1}^rs_i^2{y_i}^TAy_i\]
and
\[\tr(H)={\sum}_{i=1}^r{\Delta H}_{ii}={\sum}_{i=1}^r\frac{1}{s_i^2}{y_i}^TBy_i.\]
So, $L(Y,S,\Omega)$ is equal to
\begin{eqnarray}
L(Y,S,\Omega)&=&{\sum}_{i=1}^rs_i^2{y_i}^TAy_i+{\sum}_{i=1}^r\frac{1}{s_i^2}{y_i}^TBy_i\label{15}\\
&-&\tr (Y^TCY-\Omega(Y^TY-I)).\nonumber
\end{eqnarray}
Now, from (\ref{15}) we have
\begin{center}
$\frac{\partial L}{\partial s_i}=2s_i({y_i}^TAy_i)-\frac{2}{s_i^3}({y_i}^TBy_i)=0,$
\end{center}
and $s_i$ can be computed by $s_i={(\frac{{y_i}^TBy_i}{{y_i}^TAy_i})}^{\frac{1}{4}}$.
\qquad
\end{proof}

\vspace{0.4cm}

Considering the above discussion, in each iteration of our proposed algorithm we need to\\
(1) compute one term of the sequence $Y_i \in V_r({\mathbb{R}}^n)$ converging to the minimizer of the\\$\indent$error $E=f(Y,S)$, and\\
(2) compute the diagonal elements of $S$ from (\ref{27}).\\

Edelman \cite{1} introduced two methods for solving optimization problems on Stiefel manifolds: the Newton method and conjugate gradient method on the Stiefel manifold. Here, we adaptively use the Newton approach to develop an algorithm for solving (\ref{12}). In each iteration of our proposed algorithm, a Newton step is computed from the Newton method on the Stiefel manifold and then the diagonal elements of $S$, $s_i$, are updated by (\ref{27}). We show in Section 3.2 that our proposed algorithm converges to the unique solution of (\ref{12}) at least quadratically. Also, we discuss its computational complexity in Section 3.2.\\\\
We are now ready to outline the steps of our proposed algorithm.\\

\textbf{Algorithm 1.} Solving rank $r$ positive semi-definite total least squares problem using the Newton method on Stiefel manifold (R$r$-PSDTLS).\\
- $\epsilon$ and $\delta$ are upper bounds for relative and absolute error, respectively taken to be close to the machine (or user's) unit roundoff error and machine (or user's) zero.\\
(1) Let $A=D^TD$, $B=T^TT$ and $C=D^TT+T^TD$.\\
(2) Choose $Y$ such that $Y^TY=I$.\\
Repeat\\
(3.1) Let $s_i={(\frac{{y_i}^TBy_i}{{y_i}^TAy_i})}^{\frac{1}{4}}$.\\
(3.2) Compute the $n\times r$ matrix $F_Y$ such that ${F_Y}_{ij}=\partial E/\partial Y_{ij}$ and let
\[G=F_Y-YF_Y^TY.\]
(3.3) To compute $\Delta$, solve the linear system of equations
\[F_{YY}(\Delta)-Yskew(F_Y^T\Delta)-skew(\Delta F_Y^T)Y-\frac{1}{2}(I-YY^T) \Delta Y^TF_Y^T=-G,\]
$\hspace{1cm}$where 
\[F_{YY}(\Delta)=A\Delta S^2-YS^2{\Delta}^TAY-C\Delta+Y{\Delta}^TCY+B\Delta S^{-2}-YS^{-2}{\Delta}^TBY\]
$\hspace{1cm}$and $skew(X)=\frac{X-X^T}{2}$.\\
(3.4) Move from $Y$ in direction $\Delta$ to $\bar{Y}$ using $\bar{Y}=YM+QN$, where
\[(I-YY^T)\Delta=QR\]
$\hspace{1cm}$is the compact $QR$ factorization of $(I-YY^T)\Delta$, and $M$ and $N$ are:
\[\left(
                                               \begin{array}{c}
                                                 M \\
                                                 N
                                               \end{array}
                                             \right)=exp(\left(
                                               \begin{array}{cc}
                                                 K & -R^T \\
                                                 R & 0
                                               \end{array}
                                             \right)),\]
$\hspace{1cm}$with $K=Y^T\Delta$.\\
(3.5) Compute $Error=\|\bar{Y}-Y\|$. Let $Y=\bar{Y}$.\\
until {$Error\leq \epsilon \|Y\|+\delta$.}\\
(4) Let $X=YS^2Y^T$ and $E=\tr (Y^TAYS^2-Y^TCY+Y^TBYS^{-2})$.\\\\
\textbf{Note.} The linear equation
\begin{equation}\label{17}
F_{YY}(\Delta)-Yskew(F_Y^T\Delta)-skew(\Delta F_Y^T)Y-\frac{1}{2}(I-YY^T) \Delta Y^TF_Y^T=-G
\end{equation}
may be solved by various methods including conjugate gradient and GMRES \cite{20}. Another possible method is to convert the linear operator appearing on the left side of (\ref{17}) to an $nr\times nr$ linear system of equations. In Section 5, we present the numerical results obtained by using these three methods and compare the respective obtained accuracies and computing times.\\
\subsection{\textbf{Solving Semi-definite Total Least Squares Problem}} In Section 3.1, we outlined Algorithm 1 to solve the rank $r$ positive semi-definite total least
squares problem. Here, we discuss how to solve the general positive semi-definite total least
squares problem.\\

A positive semi-definite solution for the overdetermined linear system of equations $DX\simeq T$, whose rank is not known, needs to be computed.
This problem arises for example in estimation of compliance matrix of a deformable structure \cite{18}.
To solve this problem, we can apply PSDTLS for possible values of  $r=1,\cdots,n$, compute the corresponding solutions $X_r=Y^TS^2Y$, and identify the one
 minimizing $E=\tr (Y^TAYS^2-Y^TCY+Y^TBYS^{-2})$. We will refer to this approach as PSDTLS. In Section 5.1, we report some numerical results to compare PSDTLS by two existing methods. Although our proposed method (PSDTLS) computes the minimizer of $E$ for each value of $r=1,\cdots,n$ and then finds the optimal solution among $X_r$, for $r=1,\cdots,n$, it takes less time to solve the problem than two other proposed methods in the literature.

\subsection{\textbf{Convergence Properties and Computing Cost}}
Here, we discuss convergence properties of R$r$-PSDTLS. We cite a theorem to be used to establish the local quadratic convergence of R$r$-PSDTLS to the unique solution of (\ref{12}). We also show that the computational complexity of every iteration of our proposed algorithm is
\[N=2mn^2+n^3r^2+n^2(r^2+r)+2n^2r.\]
Moreover, we provide an upper bound for the computational complexity of our proposed approach for solving the positive semi-definite total least squares problem, PSDTLS.
\begin{theorem} \label{28}
(\cite{11}) Newton's method \cite{1} applied to the function $f(Y)=\tr (Y^TQYN)$ on the Stiefel manifold
\[V_r({\mathbb{R}}^n)=\{ Y\in {\mathbb{R}}^{n\times r}; Y^TY=I\},\]
locally converges to the unique solution of
\begin{eqnarray}
\min F(Y), \nonumber \\
Y^TY=I,\nonumber\\
\end{eqnarray}
at least quadratically.\\
\end{theorem}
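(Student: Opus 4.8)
The plan is to treat this as the standard local convergence result for Newton's method on a Riemannian manifold, specialized to the Stiefel geometry developed in \cite{1}. Since $V_r(\mathbb{R}^n)$ is compact and $f(Y)=\tr(Y^TQYN)$ is a smooth (in fact quadratic) function of the entries of $Y$, a global minimizer exists; the first task is to pin down the critical points and argue that the minimizer is unique. I would take $Q$ symmetric and $N$ diagonal with distinct diagonal entries, so that $f$ is the weighted Rayleigh (Brockett) cost, and work with the canonical metric on $V_r(\mathbb{R}^n)$.

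First I would compute the Riemannian gradient and Hessian of $f$ using the formulas of \cite{1}. Setting the gradient to zero forces the columns of $Y$ to span an invariant subspace of $Q$, so the critical points are precisely the matrices whose columns are signed eigenvectors of $Q$. By a rearrangement argument, the minimum pairs the ordered eigenvalues of $Q$ with the ordered diagonal weights of $N$, and under distinctness of the relevant eigenvalues and weights this optimal pairing is essentially unique, which yields the claimed unique solution $Y^\ast$.

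Second I would establish that $Y^\ast$ is \emph{nondegenerate}, i.e. that the Hessian operator $\mathrm{Hess}\,f(Y^\ast)$ is invertible (positive definite) on the tangent space $T_{Y^\ast}V_r(\mathbb{R}^n)$. Evaluating the Hessian on admissible tangent directions produces a quadratic form whose eigenvalues are weighted differences of eigenvalues of $Q$, and the spectral-gap hypothesis guarantees that none of these vanish. This nondegeneracy step is the main obstacle, since it is exactly what rules out flat directions along which the Newton iteration could stall or lose its quadratic rate; the rest of the argument is essentially mechanical once it is in hand.

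Finally, with $f$ smooth and $\mathrm{Hess}\,f(Y^\ast)$ invertible, the conclusion follows from the general quadratic-convergence theorem for the geodesic Newton iteration on a Riemannian manifold. Writing one step as: solve $\mathrm{Hess}\,f(Y_k)[\Delta_k]=-\nabla f(Y_k)$ for the tangent vector $\Delta_k$ and move along the geodesic from $Y_k$ in the direction $\Delta_k$, one expands $f$ and its gradient to second order along geodesics and bounds $\|Y_{k+1}-Y^\ast\|$ by a constant multiple of $\|Y_k-Y^\ast\|^2$ in a neighborhood of $Y^\ast$, using smoothness of the exponential map and Lipschitz continuity of the Hessian near $Y^\ast$. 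This is precisely the specialization of \cite{11} to $f(Y)=\tr(Y^TQYN)$ asserted in the statement.
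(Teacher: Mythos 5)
Your proposal is correct in outline, but it takes a genuinely different route from the paper for a simple reason: the paper does not prove this theorem at all --- its entire ``proof'' is the citation \emph{See \cite{11}}, deferring the result to Smith's general theory of Newton's method on Riemannian manifolds. You instead reconstruct the argument: identify the critical points of the Brockett-type cost $\tr(Y^TQYN)$ (columns of $Y$ are eigenvectors of $Q$ when $N$ has distinct diagonal entries), locate the minimizer by a rearrangement argument, prove nondegeneracy of the Riemannian Hessian via spectral gaps, and then invoke the general quadratic-convergence theorem for the geodesic Newton iteration. That is the standard and correct path, and you rightly flag Hessian nondegeneracy as the crux. What the comparison reveals, however, is that your proof needs hypotheses the stated theorem does not make: distinctness of the diagonal entries of $N$ and of the relevant eigenvalues of $Q$. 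These are not cosmetic --- without them the minimizer is a continuum (not even ``essentially unique''), the Hessian is singular along the symmetry directions, and the quadratic rate can genuinely fail; even with them, uniqueness holds only up to the $2^r$ column-sign flips, so ``the unique solution'' in the statement is at best an equivalence class, which your phrase ``essentially unique'' quietly absorbs. So your approach buys a self-contained and honest proof at the cost of exposing that the theorem, as transcribed in the paper, is a loose paraphrase of Smith's result (which asserts local quadratic convergence to a \emph{nondegenerate critical point}, not to a unique global minimizer); the paper's citation-only approach buys brevity but hides exactly the nondegeneracy condition on which the application of the theorem to problem (3.2) actually depends.
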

\begin{proof}
See \cite{11}.
\end{proof}\\

\begin{lemma}
Algorithm 1 converges locally to the unique solution of problem (\ref{12}) at least quadratically.
\end{lemma}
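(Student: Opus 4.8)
The plan is to reduce the convergence claim to the cited Theorem~\ref{28} by isolating the Newton step carried out on the Stiefel manifold and treating the diagonal update (\ref{27}) as an exact elimination of the second block of variables. First I would observe that in each iteration the matrix $S$ is reset to its optimal value through the closed-form expression (\ref{27}) of Theorem~\ref{14}, while steps (3.2)--(3.4) perform precisely Edelman's Newton step on $V_r({\mathbb{R}}^n)$ for the map $Y \mapsto f(Y,S)$ with $S$ held fixed; in particular the operator $F_{YY}$ appearing in (\ref{17}), together with its skew correction terms, is the Riemannian Hessian of this map. Hence it suffices to analyze the $Y$-iteration for fixed $S$ and then account for the coupling introduced by the $S$-update.

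For fixed $S$, I would write the objective as the finite sum
\[ f(Y,S) = \tr(Y^TAYS^2) - \tr(Y^TCY) + \tr(Y^TBYS^{-2}), \]
each summand being of the model form $\tr(Y^TQYN)$ covered by Theorem~\ref{28} (with $(Q,N)$ equal to $(A,S^2)$, $(C,I)$ and $(B,S^{-2})$, respectively). Since the Riemannian gradient and Hessian depend linearly on the objective, the Newton iteration for the sum is the sum of the three individual Newton data, and the convergence argument of Theorem~\ref{28} applies verbatim provided the minimizer is unique and nondegenerate. This nondegeneracy I would supply from Lemma~\ref{55} and the Note following it: under the stated full-column-rank condition on $s_iD - \frac{1}{s_i}T$ the function is strictly convex, so the minimizer is unique and its second-order optimality conditions are strict, i.e. the reduced tangent-space Hessian is positive definite. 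Smoothness (Lipschitz continuity of the Hessian) near the solution is immediate because $f$ is polynomial in the entries of $Y$ and rational in the entries of $S$, with $S$ bounded away from singularity in a neighborhood of a solution, where each $s_i = (y_i^TBy_i/y_i^TAy_i)^{1/4}$ is well defined and positive. Together these give the at-least-quadratic local rate for the $Y$-block, and the limit is the \emph{unique} solution of (\ref{12}).

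The hard part will be justifying that interleaving the exact update (\ref{27}) of $S$ with a \emph{single} Newton step in $Y$ does not degrade the quadratic rate, i.e. controlling the cross-block coupling. I would handle this in one of two equivalent ways. One option is to eliminate $S$ by substituting (\ref{27}) and to regard the whole scheme as an inexact Newton method on the reduced objective $\tilde f(Y) = f(Y,S(Y))$ on $V_r({\mathbb{R}}^n)$; since $S(Y)$ is a smooth function of $Y$ near a solution, the error in the $S$-block is slaved to the error in the $Y$-block and therefore contracts at least as fast, so the composite error inherits the quadratic bound. The cleaner option is to view the algorithm as Newton's method applied to the full first-order optimality map in the variables $(Y,S,\Omega)$ and to show that the Jacobian of this map is nonsingular at the solution: the block associated with $S$ is diagonal and invertible, with diagonal entries $\partial^2 L/\partial s_i^2 = 2\,y_i^TAy_i + 6\,s_i^{-4}\,y_i^TBy_i > 0$, and the $Y$-block is the nonsingular Hessian established above, so a standard Schur-complement argument yields nonsingularity of the full Jacobian and hence quadratic convergence by the usual Newton theory. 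Either route reduces the statement to the nonsingularity and smoothness facts already in hand, which is why I expect the coupling estimate, rather than any single computation, to be the crux.
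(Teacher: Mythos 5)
Your proof skeleton is the same as the paper's: the paper's entire argument is that Algorithm 1 consists of (i) Newton's method on the Stiefel manifold in $Y$ and (ii) the closed-form update (\ref{27}) of $S$, that ``the rate of convergence is not affected by (\ref{27})'' and is governed by the Newton step, after which it cites Theorem \ref{28}. Where you differ is that you correctly recognize that this bare assertion --- that interleaving the exact $S$-update with a single Newton step in $Y$ per iteration preserves the quadratic rate --- is precisely what needs proof; the paper offers no argument for it. Your fixed-$S$ analysis (decomposing $f(\cdot,S)$ into terms of the model form $\tr(Y^TQYN)$ and using Lemma \ref{55} for nondegeneracy) is, if anything, more careful than the paper's direct appeal to Theorem \ref{28}, whose statement covers a single term of that form.

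However, neither of your two routes closes the gap you flagged. Route two rests on a false premise: Algorithm 1 is not Newton's method on the full first-order optimality map in $(Y,S,\Omega)$. Full Newton would solve, at each iteration, one coupled linear system whose Jacobian contains the cross blocks $\partial^2 L/\partial Y\,\partial s_i$; Algorithm 1 instead performs a nonlinear block Gauss--Seidel sweep --- an exact closed-form solve in $S$ followed by a $Y$-step computed with $S$ frozen --- and nonsingularity of the full Jacobian does not endow such an alternating scheme with the quadratic rate of full Newton. Route one is the right framing, but the ``slaved error'' observation misses the real difficulty. The $Y$-step uses the frozen-$S$ operator $F_{YY}$ of (\ref{17}), whereas Newton's method on the reduced function $\tilde f(Y)=f(Y,S(Y))$ requires the reduced Hessian, which by the implicit function theorem is the Schur complement $H_{YY}-H_{YS}H_{SS}^{-1}H_{SY}$ (with $H$ denoting second partials of $f$). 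The gradients coincide, since $\nabla_S f=0$ along $S=S(Y)$, but the Hessians differ by $H_{YS}H_{SS}^{-1}H_{SY}$, and an approximate-Newton iteration whose Hessian error does not vanish at the solution generically converges only linearly. Here $\partial^2 f/\partial s_i\,\partial y_i = 4s_iAy_i-4s_i^{-3}By_i$, so the coupling vanishes at the solution only if $By_i=s_i^4Ay_i$ for each $i$ --- a strictly stronger condition than the scalar identity $y_i^TBy_i=s_i^4\,y_i^TAy_i$ enforced by (\ref{27}), and one that does not follow from the KKT conditions even after projection onto the tangent space of the Stiefel manifold. So the coupling estimate you yourself identified as the crux remains unproved in your proposal; to be fair, it is equally unproved in the paper, whose proof consists of exactly the assertion you set out to justify.
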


\begin{proof}
In Algorithm 1, we have two main computations: applying Newton's approach on Stiefel manifold to update $Y$ and updating the scalars $s_i$ using (\ref{27}). The rate of convergence is not affected by (\ref{27}) and it is governed by Newton's approach. Thus, considering Theorem \ref{28}, R$r$-PSDTLS converges at least quadratically to the unique solution of (\ref{12}).
\qquad
\end{proof}

$\vspace{0.4cm}$

$\hspace{-0.6cm}$\textbf{Computing Cost: Rank $r$ Positive Semi-definite Total Least Squares Problem.}
 The computational complexity of one iteration of R$r$-PSDTLS is given in Table \ref{t1}. The first, second and third columns respectively give the computational complexities of solving the linear problem (\ref{17}) using conjugate gradient method in the operator form (CG-O), GMRES in the operator form (GMRES-O) and conjugate gradient method after converting (\ref{17}) into a linear system of equations (CG-L); for details, see \cite{20}.
\begin{table}[htbp]
\caption{Computational complexities for one iteration using different approaches.}
\label{t1}
\begin{center}\footnotesize
\begin{tabular}{|c|c|c|c|}\hline
{\multirow{2}{*}{Computation}} & \multicolumn{3}{c|}{Time complexity} \\ \cline{2-4}
 & CG-O & GMRES-O & CG-L\\
    \hline
$s_i$ & $2n^2r$ & $2n^2r$ & $2n^2r$\\
$G$ & $n^2(r^2+r)$ & $n^2(r^2+r)$ & $n^2(r^2+r)$\\
Solving (\ref{17}) & $n^3r^2$ & $n^3r^2$ & $n^3r^3$\\
    \hline
\multirow{2}{*}{Total complexity} & $n^3r^2+n^2(r^2+r)$ & $n^3r^2+n^2(r^2+r)$ & $n^3r^3+n^2(r^2+r)$\\
 {} & $+2n^2r$ & $+2n^2r$ & $+2n^2r$\\
    \hline
\end{tabular}
\end{center}
\end{table}
\normalsize
Also, the complexity of computing $A$, $B$ and $C$ in step (1) of R$r$-PSDTLS is $2mn^2$. Thus, the total computational complexity of performing one iteration of R$r$-PSDTLS is
\[N_r=2mn^2+n^3r^2+n^2(r^2+r)+2n^2r.\]
As shown in Table 3.1, the computational complexity of CG-L is approximately $r$ times greater than the ones due to CG-O and GMRES-O.\\\\
\textbf{Computing Cost: Positive Semi-definite Total Least Squares Problem.
}The computational cost for solving the positive semi-definite total least squares problem is equal to the sum of the computational costs for solving all the rank $r$ positive semi-definite total least squares problems. The GMRES algorithm for solving the $n\times r$ operator equation (\ref{17}) is terminated after at most $n$ iterations \cite{20}; hence, the maximum computational cost would be
\[N=2mn^2+{\sum}_{r=1}^nn(n^3r^2+n^2(r^2+r)+2n^2r)\simeq 2mn^2+\frac{n^7}{3}+\frac{n^6}{3}+n^5+\frac{n^4}{2}.\]

\subsection{\textbf{An Special Case}}
In some applications like rank one signal recovery, a positive semi-definite rank 1 least squares problem needs to be solved; see, e.g., \cite{26}. The following lemma is concerned with the special case $r=1$. \\
\begin{lemma} If $r=1$, then problem (\ref{12}) can be converted to a quadratic eigenvalue problem.\\
\end{lemma}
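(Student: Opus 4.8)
The plan is to specialize the optimization problem (\ref{12}) to the case $r=1$ and show that the stationarity conditions reduce to a quadratic eigenvalue problem in the single column vector $y\in\mathbb{R}^n$. When $r=1$, the matrix $Y$ collapses to a single unit vector $y$ with $y^Ty=1$, and $S=\operatorname{diag}(s_1)$ is a single scalar $s$. First I would substitute these into the objective to obtain the scalar function $f(y,s)=s^2 y^TAy - y^TCy + s^{-2} y^TBy$, and use Theorem \ref{14} to eliminate $s$ via $s^2=\sqrt{y^TBy/y^TAy}$, so that the remaining problem is purely in $y$ on the unit sphere.

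Next I would form the Lagrangian $L(y,\lambda)=s^2 y^TAy - y^TCy + s^{-2} y^TBy + \lambda(y^Ty-1)$ and set $\nabla_y L=0$. Differentiating gives a relation of the form $s^2 Ay - Cy + s^{-2} By + \lambda y = 0$. The key step is to recognize that, after clearing the $s^{-2}$ term by multiplying through by $s^2$ (equivalently by $\mu:=s^2$), this becomes
\[
\mu^2 Ay - \mu Cy + By + \lambda\mu y = 0,
\]
which is quadratic in the scalar $\mu=s^2$. Collecting the terms multiplying $y$, one obtains an expression of the form $(\mu^2 A - \mu C + B)y = -\lambda\mu y$, i.e. a quadratic matrix pencil $Q(\mu)=\mu^2 A - \mu C + B$ acting on the eigenvector $y$, which is precisely the structure of a quadratic eigenvalue problem. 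I would then identify the multiplier $\lambda$ and the scaling so that the right-hand side is absorbed, confirming that $y$ is an eigenvector of the quadratic pencil $Q(\mu)=\mu^2 A-\mu C + B$ and $\mu=s^2$ is the associated eigenvalue.

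The main obstacle I anticipate is handling the Lagrange multiplier $\lambda$ cleanly: the naive stationarity condition produces an extra $\lambda\mu y$ term that is not part of the standard pencil $Q(\mu)y=0$. To resolve this I would use the consistency between the two KKT equations — the relation for $s$ from Theorem \ref{14} together with $\nabla_y L=0$ — to show that $\lambda$ takes a specific value (obtained by left-multiplying the gradient equation by $y^T$ and using $y^Ty=1$) that makes the residual vanish along $y$, so that the eigenvalue relation $(\mu^2A-\mu C+B)y=0$ holds exactly. I would verify that the definitions $A=D^TD$, $C=D^TT+T^TD$, $B=T^TT$ give $Q(\mu)=(\mu D-\mu^{-1}T)^T(\mu D-\mu^{-1}T)\cdot\mu$-type factorization consistent with the reformulation in Lemma \ref{55}, confirming that solving (\ref{12}) for $r=1$ is equivalent to finding a real positive eigenvalue $\mu=s^2$ and eigenvector $y$ of the quadratic eigenvalue problem, which completes the proof.
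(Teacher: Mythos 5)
Your reduction breaks down at exactly the step you flagged as the ``main obstacle'': the Lagrange multiplier does not vanish, and it cannot be made to vanish. Left-multiplying your stationarity condition $s^2Ay-Cy+s^{-2}By+\lambda y=0$ by $y^T$ and using $y^Ty=1$ gives $\lambda=-(s^2y^TAy-y^TCy+s^{-2}y^TBy)=-E$, the negative of the objective value; by the paper's first lemma, $E\geq 0$ with equality only when $\Delta D=0$, so in any genuinely inconsistent problem $\lambda\neq 0$. Your equation therefore reads $(\mu^2A-\mu C+B)y=E\mu y$ with $\mu=s^2$, which contains \emph{two} unknown scalars ($\mu$ and the a priori unknown optimal value $E$) and hence is not a quadratic eigenvalue problem. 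Worse, the cleaned-up relation $(\mu^2A-\mu C+B)y=0$ that you want is impossible in the well-posed case: your own factorization shows
\[
\mu^2A-\mu C+B=\mu\,{(sD-s^{-1}T)}^T(sD-s^{-1}T),\qquad \mu=s^2>0,
\]
and under the full-column-rank (strict convexity) hypothesis of Lemma \ref{55} this matrix is positive definite, so it has no nonzero null vector; indeed $y^T(\mu^2A-\mu C+B)y=\mu E>0$ whenever $E>0$.

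The paper's proof uses a different idea that removes this obstruction: instead of working with $(y,s)$, substitute $u=sy$ and $v=\frac{1}{s}y$. The Lagrangian then becomes the quadratic form $u^TAu+v^TBv-u^TCv-\Omega(u^Tv-1)$, and the KKT conditions are \emph{linear} in $(u,v)$ with the multiplier $\Omega$ entering linearly:
\[
2Au-Cv-\Omega v=0,\qquad 2Bv-Cu-\Omega u=0,\qquad u^Tv=1.
\]
Assuming $D$ and $T$ have full rank (so $A$ is invertible), eliminating $u=\frac{1}{2}A^{-1}(C+\Omega I)v$ yields
\[
\Bigl(2B-\frac{1}{2}(C+\Omega I)A^{-1}(C+\Omega I)\Bigr)v=0,
\]
a genuine quadratic eigenvalue problem whose spectral parameter is the Lagrange multiplier $\Omega$ (one can check that $\Omega=E$ at a KKT point), not $s^2$; the scalar $s$ has been absorbed into $u$ and $v$ and is recovered afterwards from the eigenpair. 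If you want to salvage your route, the repair is precisely this change of variables: treat the multiplier, rather than $s^2$, as the eigenvalue.
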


\begin{proof}
Reformulating the Lagrangian function for the optimization problem (\ref{12}), for the case $r=1$, we have
\[L(y,s,\Omega)=s^2y^TAy+\frac{1}{s^2}y^TBy-y^TCy-\Omega(y^Ty-1)).\]
Let $u=sy$ and $v=\frac{1}{s}y$. Thus,
\[L(u,v,\Omega)=u^TAu+v^TBv-u^TCv-\Omega(u^Tv-1)).\]
The KKT necessary optimality conditions lead to
\begin{eqnarray}
2Au-Cv-\Omega v=0,\label{18}\\
2Bv-Cu-\Omega u=0,\nonumber\\
u^Tv=1.\nonumber
\end{eqnarray}
If $D$ and $T$ have full ranks, then it can be concluded from (\ref{18}) that
\begin{eqnarray}
(2B-\frac{1}{2}(C+\Omega I)A^{-1}(C+\Omega I))v=0,\label{19}\\
u=\frac{1}{2}A^{-1}(C+\Omega I)v.\nonumber
\end{eqnarray}
Note that (\ref{19}) is a quadratic eigenvalue problem which may be solved by various methods \cite{13,28}. This approach will be referred as R$1$-PSDTLS.
\end{proof}

Next, we point out two mathematical problems and describe how to solve them using R$r$-PSDTLS.
\section{Two Problems and Their Solutions}
Two slightly different problems also arise in some context. Here, we describe these problems and show how to solve these problems making use of Algorithm 1.\\\\
($i$) Positive semi-definite total minimum rank problem:
\begin{eqnarray}
&\min& \rank(X)\nonumber\\
&s.t.&\\
&&\tr({\Delta D}^T\Delta T)<e\nonumber\\
&& X\succeq 0.\nonumber
\end{eqnarray}
This problem arises in different contexts such as system identification and control, Euclidean embedding and collaborative filtering \cite{10}. Both ordinary and total least squares formulations have been considered for solving the problem \cite{10,16}. In an ordinary formulation, the minimum possible rank of a positive semi-definite matrix $X$ needs to be computed so that the ordinary least squares error, $\|DX-T\|$, is less than an error bound, $e$. In a total formulation, however, the goal is to minimize $\rank (X)$, where $X$ is a positive semi-definite matrix satisfying $\|[\Delta D,\Delta T]\|<e$. To solve this problem using R$r$-PSDTLS, our proposed total error $E=\tr({\Delta D}^T\Delta T)$ needs to satisfy the constraint $E<e$. We start from the smallest possible rank, $r=1$, and solve the corresponding positive semi-definite total least squares problem. If the inequality $E<e$ holds for the computed $X_r$, then we stop; otherwise, we increase $r$ by one and continue iteratively until satisfying the inequality $E<e$. The iteration might be performed at most $n$ times. If after $n$ iterations, none of the matrices $X_r$, $r=1,\cdots n$, satisfies the inequality $E<e$, then we consider $X_r$ with the smallest corresponding value of $E$ as the solution of the minimum rank problem. We summarize the discussion above in the following algorithm.

\textbf{Algorithm 2.} Solving minimum rank positive semi-definite total least squares (MRPSDTLS) problem.\\
(1) Let $A=D^TD$, $B=T^TT$ and $C=D^TT+T^TD$.\\
(2) Let $r=1$.\\
(3) Apply R$r$-PSDTLS with the input arguments $D$, $T$ and $r$ and compute $X$ and $E$.\\
If {$E<e$}\\
then let $X^{\ast}=X$ and stop\\
Else\\
Let $r=r+1$ and go to (3).\\
EndIf\\

A significant characteristic of our proposed approach for solving the minimum rank problem is that for each rank $r$ the minimum value of $E$ is determined applying Algorithm 1. Hence, to solve the minimum rank problem, it is sufficient to find the minimum value of $r$
satisfying the inequality constraint.\\
($ii$) Computing the correlation matrix: This problem is an special case of the positive semi-definite total least squares problem. Computing a correlation matrix is equivalent to finding a positive semi-definite matrix $X$ to satisfy
\begin{eqnarray}
X\simeq C, \nonumber\\
PX\simeq Q. \nonumber
\end{eqnarray}
The linear constraints $X\simeq C$ and $PX\simeq Q$ can be replaced by the overdetermined system of equations
\begin{equation}\label{29}
\left(
    \begin{array}{c}
      I \\
      P \\
    \end{array}
  \right)X=\left(
             \begin{array}{c}
               C \\
               Q \\
             \end{array}
           \right).
\end{equation}
To solve the overdetermined system of equations (\ref{29}), both ordinary and total formulations have been considered \cite{9,12,22,27}. We note that (\ref{29}) is an special case of positive semi-definite total least squares problem with data and target matrices $D=\left(
  \begin{array}{c}
   I \\
   P \\
  \end{array}
\right)$ and $T=\left(
          \begin{array}{c}
            T \\
             Q \\
          \end{array}
        \right)$, where $I$ is the $n\times n$ identity matrix, $P, Q\in {\mathbb{R}}^{m \times n}$ and $C\in  {\mathbb{R}}^{n \times n}$ are arbitrary. Here, we make use of the PSDTLS approach described in Section 3.2 for solving (\ref{29}).\\

Next, we report some numerical results. We provide comparison of our proposed algorithms and some existing methods on randomly generated test problems.
\section{Numerical Results}
We made use of MATLAB 2012b in a Windows 7 machine with a 3.2 GHz CPU and a 4 GB RAM to implement our proposed algorithms and other methods. We generated random test problems with random data and target matrices. These random matrices were produced using the rand command in MATLAB. The command $R=\rand(m,n)$ generates an $m \times n$ matrix $R$, with uniformly distributed random entries in the interval $[0,1]$. Using the linear transformation $Rab=(b-a)*R+a$, the matrix $Rab$ with random entries in the interval $[a,b]$ is generated.\\

The numerical results are presented in two parts. Section 5.1 represents the numerical results corresponding to the rank $r$ positive semi-definite least squares problem and the positive semi-definite total least squares problem. The numerical results for the two problems mentioned in Section 4 are reported in Section 5.2. The abbreviated names of problems and the corresponding inputs and outputs are listed in tables \ref{t2} and \ref{t3}.
\begin{table}[htbp]
\caption{Abbreviated names}
\label{t2}
\begin{center} \footnotesize
\begin{tabular}{|c|c|} \hline
Problem name & Abbreviation\\ \hline
Rank $r$ positive semi-definite least squares & R$r$-PSDLS \\
Positive semi-definite least squares & PSDLS \\
Minimum rank & MR\\
Correlation matrix & CM\\ \hline
\end{tabular}
\end{center}

\end{table}
\normalsize
\newpage
\begin{table}[htbp]
\caption{Inputs and outputs}
\label{t3}
\begin{center} \footnotesize
\begin{tabular}{|c|c|c|} \hline
Problem name & Inputs & Outputs \\ \hline
\multirow{ 3}{*}{R$r$-PSDLS} & Data matrix ($D$) & Average computing time in seconds ($t$)\\
& Target matrix ($T$) & Average error value ($E$)\\
& $r$ &\\ \hline
\multirow{ 2}{*}{PSDLS} & $D$ & $t$\\
& $T$ & $E$\\ \hline
\multirow{ 3}{*}{MR} & $D$ & $t$\\
& $T$ & Average value of $\rank(X)$ ($r$)\\
& Error bound ($e$) & \\ \hline
\multirow{ 3}{*}{CM} & $T$ & Corellation matrix $X$ \\
& $P$ & Average standard deviation value of error matrix ($Std$)\\
& $Q$ &\\ \hline
\end{tabular}
\end{center}
\end{table}
\normalsize
For a given input size, ten random inputs are generated and the average value of outputs on the ten problems are reported. In Section 5.1, the numerical results corresponding to R$r$-PSDTLS and the one due to solving the linear problem (\ref{17}) by each of the three possible methods are presented. We refer to R$r$-PSDTLS using conjugate gradient method in the operator form to solve the linear problem as R$r$-PSDTLS-CG-O, R$r$-PSDTLS using conjugate gradient method after converting the linear problem to a linear system of equations as R$r$-PSDTLS-CG-L and R$r$-PSDTLS using GMRES in the operator form to solve the linear problem as R$r$-PSDTLS-GMRES-O. Considering these numerical results, we can make the following observations:\\\\
(1) R$r$-PSDTLS-CG-L generates solutions for which the orthogonality constraint, $\indent Y^TY=I$, is exactly satisfied; however, due to the high computing cost it is not $\indent$practical for large problems.\\
(2) R$r$-PSDTLS-GMRES-O computes the solution in a less computing time than R$r$-$\indent$PSDTLS-CG-O and R$r$-PSDTLS-CG-L.\\

Some numerical results are also reported in Section 5.1 to compare our proposed approach for solving the positive semi-definite least squares (PSDLS) problem by two existing methods, the Interior point method
(PSDLS-IntP in \cite{18}), and the path following method described by Toh (PSDLS-PFToh in \cite{30}).\\

Numerical results corresponding to special problems mentioned in Section 4 are reported in Section 5.2. There, numerical results obtained by our proposed algorithm for solving the minimum rank (MR) problem (MR-PSDTLS) and two other methods are reported. These two methods are MR-Toh \cite{31} and MR-Recht \cite{10,16}. Finally, the numerical results corresponding to our proposed method (CM-PSDTLS) and two other methods, CM-IntP and CM-Sun \cite{30}, for solving the correlation matrix (CM) problem are also reported in Section 5.2. In all the tables, the headings $m$ and $n$ correspond to the matrix size (the size of both data and target matrices) and $r$ gives the unknown matrix rank and the columns with headings $t$ and $E$ respectively contain the average computing time and error value.\\

In summary, numerical results confirm the effectiveness of PSDTLS in producing more accurate solutions with lower standard deviation values in less times for solving the rank $r$ positive semi-definite total least squares problems. The reported results show that our proposed methods solve the positive semi-definite total least squares problem and the minimum rank problem with a smaller value of error, while being more efficient. Also, the presented results confirm the efficiency of our proposed method in solving the correlation matrix problem.\\
\subsection{\textbf{Positive Semi-definite Total Least Squares Problem}}
Here, we report the numerical results for solving rank $r$ positive semi-definite total least squares problems and the general positive semi-definite total least squares problem respectively.\\\\
\textbf{Rank $r$ Positive Semi-definite Total Least Squares Problem.}
In Table \ref{t4}, the average computing time, $t$, and the average error value,
\[E=\tr {(DX-T)}^T(D-TX^{\dag})U_rU_r^T,\]
are reported for PSDTLS-CG-O. The fourth column gives norm of the error corresponding to the orthogonality constraint, $\delta=\|Y^TY-I\|$.

\begin{table}[htbp]
\caption{The average computing times and the average error values for PSDTLS-CG-O.}
\label{t4}
\begin{center} \footnotesize
\begin{tabular}{|c|c|c|c|c|c|} \hline
$m$ & $n$ & $r$ & $\delta$ & $t$ & $E$ \\ \hline
20 & 10 & 5 & 1.3421E-007 & 9.9864E-004 & 1.9459E+001\\
100 & 20 & 10 & 6.7435E-006 & 8.1695E-004 & 2.0093E+002\\
100 & 50 & 50 & 5.3193E-004 & 3.3691E-003 & 8.2009E+002\\
200 & 100 & 50 & $\ast$ & $\ast$ & $\ast$\\
400 & 300 & 200 & $\ast$ & $\ast$ & $\ast$\\
1000 & 500 & 200 & $\ast$ & $\ast$ & $\ast$\\\hline
\multicolumn{6}{c}{$\ast$: Out of memory.}\\
\end{tabular}
\end{center}
\end{table}
\normalsize
In tables \ref{t5} and \ref{t6}, we report the results for PSDTLS-GMRES-O and PSDTLS-CG-L respectively.\\
\newpage
\begin{table}[htbp]
\caption{The average computing times and the average error values for PSDTLS-GMRES-O.}
\label{t5}
\begin{center} \footnotesize
\begin{tabular}{|c|c|c|c|c|c|} \hline
 $m$ & $n$ & $r$ & $\delta$ & $t$ & $E$ \\ \hline
20 & 10 & 5 & 7.4924E-008 & 7.1055E-004 & 2.2493E+001\\
100 & 20 & 10 & 1.7123E-006 & 1.1905E-003 & 2.0953E+002\\
100 & 50 & 50 & 4.3528E-005 & 3.2913E-003 & 1.4490E+003\\
200 & 100 & 50 & 2.5649E-004 & 1.3800E-002 & 2.6041E+005\\
400 & 300 & 200 & 3.7246E-003 & 1.5974E-001 & 1.0264E+006\\
1000 & 500 & 200 & 1.0021E-002 & 3.6251E-001 & 4.5691E+006\\\hline
\end{tabular}
\end{center}
\end{table}
\normalsize
\begin{table}[htbp]
\caption{The average computing times and the average error values for PSDTLS-CG-L.}
\label{t6}
\begin{center} \footnotesize
\begin{tabular}{|c|c|c|c|c|c|} \hline
$m$ & $n$ & $r$ & $\delta$ & $t$ &$E$ \\ \hline
20 & 10 & 5 & 0 & 1.0904E-003 & 1.7025E+001\\
100 & 20 & 10 & 1.0032E-009 & 1.1484E-003 & 1.8363E+002\\
100 & 50 & 50 & $\ast$ & $\ast$ & $\ast$\\
200 & 100 & 50 & $\ast$ & $\ast$ & $\ast$\\
400 & 300 & 200 & $\ast$ & $\ast$ & $\ast$\\
1000 & 500 & 200 & $\ast$ & $\ast$ & $\ast$\\\hline
\end{tabular}
\end{center}
\end{table}

\normalsize

In Figure \ref{f1}, the Dolan-Mor\'{e} time profile is presented to compare the computing times by PSDTLS-CG-L, PSDTLS-GMRES-O and PSDTLS-CG-L for solving the PSDLS problems. Generating enough test problems is necessary for producing an illustrative performance profile. We generated 300 test problems (50 problems for each matrix size). The presented time profile shows that PSDTLS-GMRES-O needs much less computing time than the other methods.

\begin{figure}[ht!]
       \includegraphics[width=0.7\textwidth]{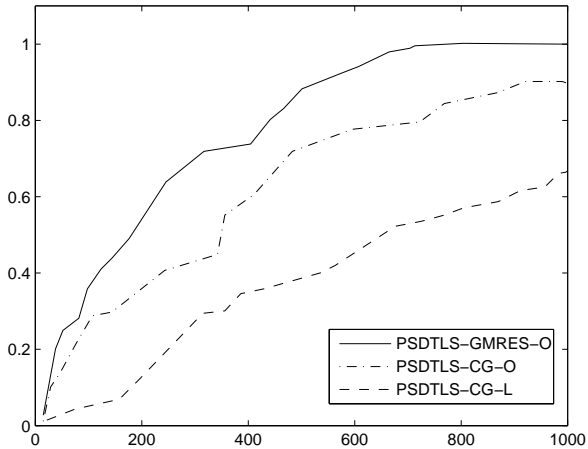}
     \caption{The Dolan-Mor\'{e} performance profile
(comparing the computing times by PSDTLS-CG-L, PSDTLS-GMRES-O and PSDTLS-CG-L).}
     \label{f1}
\end{figure}

$\hspace{-0.6cm}$\textbf{Positive Semi-definite Total Least Squares Problem.}
Here, the numerical results for solving the general positive semi-definite total least squares problems are reported. In tables \ref{t7}, \ref{t8} and \ref{t9}, the average computing times, $t$, and the average error values, $E$, for solving the PSDLS problem using PSDTLS-CG-O, PSDTLS-CG-L and PSDTLS-GMRES-O are respectively reported. The third column represents the average value of $\delta=\|Y^TY-I\|$.

\begin{table}[htbp]
\caption{The average computing times and the average error values for PSDTLS-CG-O.}
\label{t7}
\begin{center} \footnotesize
\begin{tabular}{|c|c|c|c|c|} \hline
$m$ & $n$  & $\delta$ & $t$ & $E$ \\ \hline
20 & 10 &  1.0347E-007 & 5.0208E-004 & 4.7606E+000 \\
100 & 20 & 6.4238E-006 & 9.8582E-004 & 2.1158E+001\\
100 & 50 & 1.0046E-004 & 4.3211E-003 & 2.5376E+001\\
200 & 100 & $\ast$ & $\ast$ & $\ast$\\
400 & 300 & $\ast$ & $\ast$ & $\ast$\\ \hline
\end{tabular}
\end{center}
\end{table}
\normalsize
\begin{table}[htbp]
\caption{The average computing times and the average error values for PSDTLS-GMRES-O.}
\label{t8}
\begin{center} \footnotesize
\begin{tabular}{|c|c|c|c|c|} \hline
$m$ & $n$ & $\delta$ & $t$ & $E$ \\ \hline
20 & 10 & 8.1282E-009 & 7.3702E-004 & 3.0053E+000\\
100 & 20 & 5.7631E-008 & 1.1974E-003 & 1.9550E+001\\
100 & 50 & 1.3214E-007 & 4.9063E-003 & 2.5928E+001\\
200 & 100 & 6.8472E-006 & 1.4990E-002 & 5.7797E+001\\
400 & 300 & 2.1064E-004 & 1.8509E-001 & 7.5146E+001\\ \hline
\end{tabular}
\end{center}
\end{table}
\normalsize

\begin{table}[htbp]
\caption{The average computing times and the average error values for PSDTLS-CG-L.}
\label{t9}
\begin{center} \footnotesize
\begin{tabular}{|c|c|c|c|c|} \hline
$m$ & $n$ & $\delta$ & $t$ & $E$ \\ \hline
20 & 10 & 0 & 8.9295E-004 & 3.0689E+000\\
100 & 20 & 0 & 1.6088E-003 & 1.6594E+001\\
100 & 50 & $\ast$ & $\ast$ & $\ast$\\
200 & 100 & $\ast$ & $\ast$ & $\ast$\\
400 & 300 & $\ast$ & $\ast$ & $\ast$\\ \hline
\end{tabular}
\end{center}
\end{table}
\normalsize
 Considering the reported results in tables \ref{t7}, \ref{t8} and \ref{t9}, PSDTLS-CG-O and PSDTLS-GMRES-O perform approximately the same on small problems; however, for large problems, PSDTLS-GMRES-O outperforms PSDTLS-CG-O in almost all the test problems. Thus, we report the results obtained by PSDTLS-GMRES-O in comparisons with other methods.\\

 The average computing times for solving the positive semi-definite total least squares problem using PSDTLS, PSDLS-IntP and PSDLS-PFToh are reported in Table \ref{t10}. The third column presents the values of $TOL$ for PSDLS-IntP and PSDLS-PFToh.\\
\begin{table}[htbp]
\caption{The average computing times for solving PSDLS Using PSDTLS, PSDLS-IntP and PSDLS-PFToh.}
\label{t10}
\begin{center} \footnotesize
\begin{tabular}{|c|c|c|c|c|c|} \hline
{\multirow{2}{*}{$m$}} & {\multirow{2}{*}{$n$}} &  $TOL$ & \multicolumn{3}{c|}{$t$} \\ \cline{4-6}
{} & {}  & (IntP/PFToh) & PSDTLS  &  IntP & PFToh \\ \hline
20 & 10  & 1.0000E-006 & 5.0208E-004 & 3.1785E-003 & 1.8652E-003\\
100 & 20  & 1.0000E-006 & 9.8582E-004 & 6.1142E-002 & 7.6139E-003\\
100 & 50  & 1.0000E-005 & 4.0462E-003 & 9.2783E-001 & 5.2841E-002\\
200 & 100  & 1.0000E-005 &  1.3841E-002 & $\ast$ & 1.5973E-001\\
400 & 300  & 1.0000E-004 &  1.6940E-001 & $\ast$ &  $\ast$\\ \hline
\end{tabular}
\end{center}
\end{table}

\normalsize
Similarly, the average error values, $E$, are given in Table \ref{t11} for PSDLS-IntP, PSDLS-PFToh and PSDTLS.\\
\newpage
\begin{table}[htbp]
\caption{The average error values for solving PSDLS-Using PSDTLS, PSDLS-IntP and PSDLS-PFToh.}
\label{t11}
\begin{center} \footnotesize
\begin{tabular}{|c|c|c|c|c|c|} \hline
{\multirow{2}{*}{$m$}} & {\multirow{2}{*}{$n$}} &  $TOL$ & \multicolumn{3}{c|}{$t$} \\ \cline{4-6}
{} & {}  & (IntP/PFToh) & PSDTLS  &  IntP & PFToh \\ \hline
20 & 10  & 1.0000E-006 & 2.7168E+000 & 3.7791E+000 & 5.1176E+000\\
100 & 20  & 1.0000E-006 & 9.5355E+000 & 1.2956E+001 & 9.6873E+000\\
100 & 50  & 1.0000E-005 &  1.0158E+001 & 1.9162E+001 & 1.4855E+001\\
200 & 100  & 1.0000E-005 &  2.1574E+001 & $\ast$ & 4.8546E+002\\
400 & 300  & 1.0000E-004 &  1.0976E+002 & $\ast$ & $\ast$\\ \hline
\end{tabular}
\end{center}
\end{table}
\normalsize

The corresponding Dolan-Mor\'{e} time profile is shown in Figure \ref{f2} to compare the computing times for solving the PSDLS problem. We generated 500 test problems (100 for each matrix size) to provide an illustrative time profile. The results confirm that our proposed algorithm for solving positive semi-definite least squares problem performs much faster than the other methods.\\

\begin{figure}[ht!]
       \includegraphics[width=0.7\textwidth]{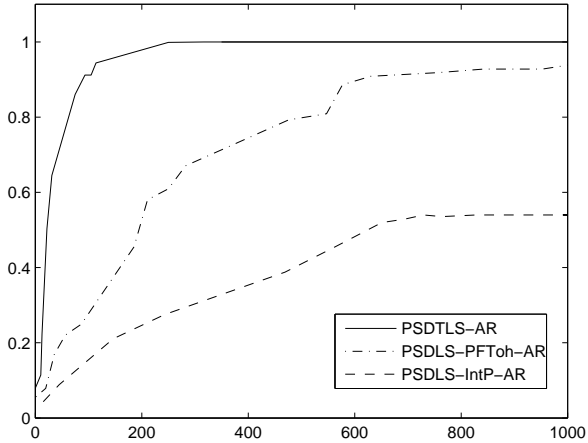}
     \caption{The Dolan-Mor\'{e} performance profile
(comparing the computing times by PSDTLS, PSDLS-IntP and PSDLS-PFToh).}
     \label{f2}
\end{figure}

\subsection{\textbf{Special Problems}}
Here, we report numerical results corresponding to the special problems mentioned in Section 4.
In Table \ref{t12}, the average computing times for solving the minimum rank (MR) problem are reported. The third column gives the value of error bound, $e$, in the MR problem.

\begin{table}[htbp]
\caption{The average computing times for solving the MR problem-Using MR-PSDTLS-CG-O, MR-PSDTLS-GMRES-O and MR-PSDTLS-CG-L methods.}
\label{t12}
\begin{center} \footnotesize
\begin{tabular}{|c|c|c|c|c|c|c|c|c|} \hline
{\multirow{2}{*}{$m$}} & {\multirow{2}{*}{$n$}} & {\multirow{2}{*}{$e$}} & \multicolumn{3}{c|}{$t$} & \multicolumn{3}{c|}{$r$} \\ \cline{4-9}
{} & {} & {} & CG-O  &  GMRES-O & CG-L & CG-O  &  GMRES-O & CG-L \\ \hline
20 & 10  & 10 & 1.0000E-003 & 1.1000E-003 & 1.0000E-003 & 3 & 3 & 4\\
100 & 20  & 300 & 9.5430E-004 & 1.8000E-003 & 1.9000E-003 & 17 & 14 & 18 \\
100 & 50  & 500 &  4.7000E-003 & 4.3000E-003 & 4.2000E-003 & 22 & 16 & 24\\
200 & 100  & 1000 &  1.4000E-002 & 1.8400E-002 & $\ast$ & 19 & 19 & $\ast$\\
400 & 300  & 3000 &  8.0024E-001 & 1.8700E-001 & $\ast$ & 67 & 66 & $\ast$\\ \hline
\end{tabular}
\end{center}
\end{table}
\normalsize

The average computing time needed by MR-PSDTLS, MR-Toh and MR-Recht and the average resulting rank, $r$, for solving the MR problem are reported in Table \ref{t13}.

\begin{table}[htbp]
\caption{The average computing times and the average error values for solving the MR problem-Using MR-PSDTLS, MR-Toh and MR-Recht methods.}
\label{t13}
\begin{center} \footnotesize
\begin{tabular}{|c|c|c|c|c|c|c|c|c|} \hline
{\multirow{3}{*}{$m$}} & {\multirow{3}{*}{$n$}} & {\multirow{3}{*}{$e$}} & \multicolumn{3}{c|}{$t$} & \multicolumn{3}{c|}{Rank} \\ \cline{4-9}
{} & {} & {} & MR-  &  MR- & MR- & MR-  &  MR- & MR- \\
{} & {} & {} & PSDTLS  &  Toh & Recht & PSDTLS  &  Toh & Recht \\ \hline
20 & 10  & 10 & 1.0000E-003 & 9.0300E-003 & 7.8100E-003 & 3 & 3 & 4\\
100 & 20  & 300 & 9.5430E-004 & 1.2300E-002 & 3.1100E-002 & 14 & 12 & 11 \\
100 & 50  & 500 &  4.2000E-003 & 2.1800E-002 & 9.8800E-003 & 16 & 13 & 13\\
200 & 100  & 1000 &  1.4000E-002 & 8.9700E-002 & 9.1200E-002 & 19 & 18 & 19\\
400 & 300  & 3000 &  1.8700E-001 & $\ast$ & 3.1060E+000& 67 & $\ast$ & 65\\ \hline
\end{tabular}
\end{center}
\end{table}
\normalsize
An illustrative comparison is also provided in Figure \ref{f3} based on the corresponding Dolan-Mor\'{e} time profile. We generated 500 test problems to provide the time profile. The presented Dolan-Mor\'{e} time profile shows that our proposed algorithm for solving minimum rank problem needs less computing time than the other two methods.
\newpage
\begin{figure}[ht!]
       \includegraphics[width=0.7\textwidth]{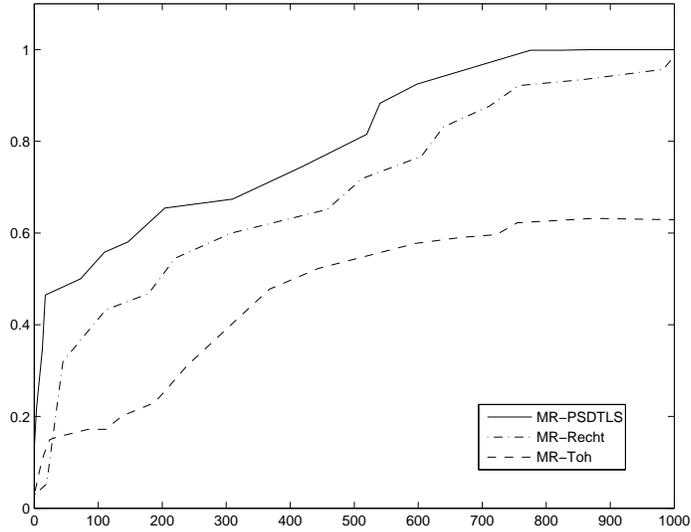}
     \caption{The Dolan-Mor\'{e} performance profile
(comparing the computing times by MR-PSDTLS, MR-Toh and MR-Recht).}
     \label{f3}
\end{figure}
In Table \ref{t14}, the average computing times are reported for computing the correlation matrices by our proposed method (CM-PSDTLS).

\begin{table}[htbp]
\caption{The average computing times for computing the correlation matrix-Using CM-PSDTLS-CG-O, CM-PSDTLS-GMRES-O and CM-PSDTLS-CG-L.}
\label{t14}
\begin{center} \footnotesize
\begin{tabular}{|c|c|c|c|c|} \hline
{\multirow{2}{*}{$m$}} & {\multirow{2}{*}{$n$}} & \multicolumn{3}{c|}{$t$} \\ \cline{3-5}
{} & {} & CG-O  &  GMRES-O & CG-L  \\ \hline
20 & 10   & 1.1000E-003 & 1.3000E-003 & 1.3000E-003 \\
100 & 20  & 2.1000E-003 & 1.6000E-003 & 3.7000E-003 \\
100 & 50  &  3.5000E-003 & 6.7000E-003 & $\ast$ \\
200 & 100 &  $\ast$ & 1.6400E-002 & $\ast$ \\
400 & 300 &  $\ast$ & 2.9800E-002 & $\ast$ \\ \hline
\end{tabular}
\end{center}
\end{table}
\normalsize
Similarly, in Table \ref{t15} the average value of standard deviation $Std$ for computing the correlation matrix is reported.

\begin{table}[htbp]
\caption{The average standard deviation values of error matrix for computing the correlation matrices, using CM-PSDTLS-CG-O, CM-PSDTLS-GMRES-O and CM-PSDTLS-CG-L .}
\label{t15}
\begin{center} \footnotesize
\begin{tabular}{|c|c|c|c|c|} \hline
{\multirow{2}{*}{$m$}} & {\multirow{2}{*}{$n$}} & \multicolumn{3}{c|}{$Std$} \\ \cline{3-5}
{} & {} & CG-O  &  GMRES-O & CG-L  \\ \hline
20 & 10   & 3.0410E-001 & 3.0280E-001 & 4.0030E-001 \\
100 & 20  & 2.9940E-001 & 2.9140E-001 & 3.3810E-001 \\
100 & 50  &  2.9310E-001 & 2.8870E-001 & $\ast$ \\
200 & 100 &  $\ast$ & 2.8840E-001 & $\ast$ \\
400 & 300 &  $\ast$ & 2.8950E-001 & $\ast$ \\ \hline
\end{tabular}
\end{center}
\end{table}
\normalsize
The average computing times for CM-PSDTLS, CM-IntP and CM-Sun are reported in Table \ref{t16}.
\newpage

\begin{table}[htbp]
\caption{The average computing times for computing the correlation matrix-Using CM-PSDTLS, CM-IntP and CM-Sun.}
\label{t16}
\begin{center} \footnotesize
\begin{tabular}{|c|c|c|c|c|} \hline
{\multirow{2}{*}{$m$}} & {\multirow{2}{*}{$n$}} & \multicolumn{3}{c|}{$t$} \\ \cline{3-5}
{} & {} &  CM-PSDTLS  &  CM-IntP & CM-Sun \\ \hline
20 & 10   & 1.1000E-003 & 2.4100E-002 & 9.6640E-003 \\
100 & 20  & 1.6000E-003 & 1.6140E-001 & 5.4100E-002 \\
100 & 50  &  3.5000E-003 & 7.4941E+000 & 4.7810E-001 \\
200 & 100 &  1.6400E-002 & $\ast$ & 2.9871E+000 \\
400 & 300 &   2.9800E-002 & $\ast$ & $\ast$\\ \hline
\end{tabular}
\end{center}
\end{table}
\normalsize

In Figure \ref{f4}, the Dolan-Mor\'{e} time profile is presented to compare the needed computing times by CM-PSDTLS, CM-IntP and CM-Sun to solve the correlation matrix problem. Here, we generated 250 test problems (50 for each matrix size). The presented time profile confirms that our proposed algorithm computes a correlation matrix much faster than the other methods.

\begin{figure}[ht!]
       \includegraphics[width=0.7\textwidth]{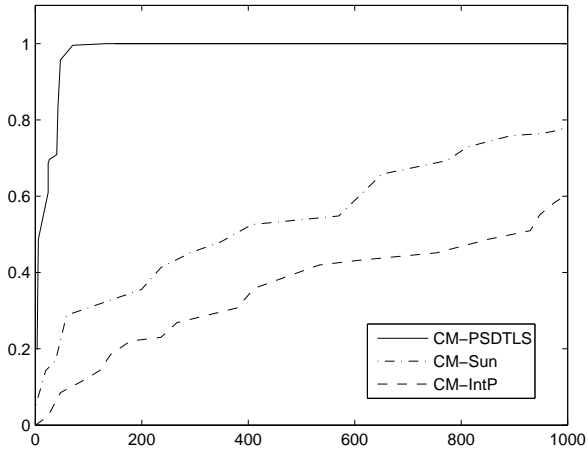}
     \caption{The Dolan-Mor\'{e} performance profile
(comparing the computing times by CM-PSDTLS, CM-IntP and CM-Sun).}
     \label{f4}
\end{figure}
In Table \ref{t17}, the average values of the standard deviation, $Std$, for computing the correlation matrix are reported.

\begin{table}[htbp]
\caption{The average standard deviation values of error matrix for computing the correlation matrices, using CM-PSDTLS, CM-IntP and CM-Sun.}
\label{t17}
\begin{center} \footnotesize
\begin{tabular}{|c|c|c|c|c|} \hline
{\multirow{2}{*}{$m$}} & {\multirow{2}{*}{$n$}} & \multicolumn{3}{c|}{$Std$} \\ \cline{3-5}
{} & {} &  CM-PSDTLS  &  CM-IntP & CM-Sun \\ \hline
20 & 10   & 3.0410E-001 & 4.0404E+003 & 1.2957E+002 \\
100 & 20  & 2.9940E-001 & 1.7266E+004 & 6.3595E+004 \\
100 & 50  &  2.9310E-001 & 8.2909E+005 & 3.8741E+006 \\
200 & 100 &  $\ast$ & 2.8840E-001 & 1.2389E+007 \\
400 & 300 &  $\ast$ & 2.8950E-001 & $\ast$ \\ \hline
\end{tabular}
\end{center}
\end{table}
\normalsize
Considering the numerical results reported in this section, we summarize our observations:\\\\
(1) A newly defined problem, R$r$-PSDLS, was considered and an efficient algorithm\\$\indent$was proposed for its solution.\\
(2) Although our proposed algorithm for solving the PSDLS problem, PSDTLS, ap-\\$\indent$plies R$r$-PSDTLS, for $r=1,\ldots,n$, in search for the solution, it appears to be $\indent$more efficient than PSDLS-IntP and PSDLS-PFToh.\\
(3) In contrast with other available methods, our use of total formulation for solving\\$\indent$the PSDLS problem to consider error in both data and target matrices turns to be $\indent$practically effective to produce more meaningful results.\\
(4) The proposed method for solving the PSDLS problem, PSDTLS, is more efficient\\$\indent$than the other methods.\\
(5) The proposed method for solving the minimum rank problem, MR-PSDTLS, is $\indent$also more efficient than MR-Toh and MR-Recht.\\
(6) The proposed method for computing the correlation matrix, CM-PSDTLS, shows\\$\indent$to be more efficient and robust in computing a correlation matrix with a lower $\indent$value of standard deviation of error in $T$ as compared to CM-IntP and CM-Sun.\\

\section{Concluding Remarks}
We proposed a new approach to solve positive semi-definite total least squares (PSDLS) problems. Consideration of our proposed error estimate for both data and target matrices admitted a more realistic problem formulation. We first considered a newly defined given rank positive semi-definite total least squares (R$r$-PSDTLS) problem and presented an at least quadratically convergent algorithm for its solution. Numerical results confirmed the effectiveness of our approach to compute solutions of R$r$-PSDLS problems in less computing time than the interior point method and the path following algorithm. We then showed how to apply R$r$-PSDTLS to solve the general PSDLS problem. Based on the reported numerical results, our method for solving the PSDLS problem also showed to be more efficient than the interior point method and the path following algorithm. An specifically effective approach was also described to solve the rank $1$ positive semi-definite total least squares problem, R$1$-PSDTLS. In addition, we noted that R$r$-PSDTLS can be applied to other problems arising in control and financial modeling: the minimum rank (MR) problem and correlation matrix computation. Using the Dolan-Mor\'{e} performance profiles, we showed our proposed method for solving the MR problem to be more efficient than a path following algorithm and a semi-definite programming approach for solving the MR problem. Furthermore, in computing the correlation matrix, numerical results showed lower standard deviation of error as compared to the interior point method and semi-definite programming approach.\\\\

\textsc{\textbf{ACKNOWLEDGEMENT.}} The authors thank Research Council of Sharif University of Technology for supporting this work.\\

\end{document}